\def\A{{\mathcal A}}
\def\B{{\mathcal B}}
\def\J{{\mathcal J}}
\def\1{\mathbbmss{1}}
\def\n{\mathbb{N}}
\def\L{\mathcal{L}}
\def\X{\mathcal{X}}
\def\K{\mathcal{K}}
\def\C{\mathcal{C}}
\def\H{\mathcal{H}}
\def\T{\mathcal{T}}
\newtheorem{df}{Definition}[section]
\newtheorem{thm}[df]{Theorem}
\newtheorem{pro}[df]{Proposition}
\newtheorem{cor}[df]{Corollary}
\newtheorem{rema}[df] {Remark}
\def\sfstp{{\hskip-1em}{\bf.}{\hskip1em}}
\def\enddemo{\qed \endtrivlist}
\let\csname enddemo*\endcsname=\enddemo
\def\qedsymbol{\ifmmode\bgroup\else$\bgroup\aftergroup$\fi
  \vcenter{\hrule\hbox{\vrule
height.6em\kern.6em\vrule}\hrule}\egroup}
\def\qed{\ifmmode\else\unskip\nobreak\fi\quad\qedsymbol}
\begin{document}
\title
{ \bf Isolated spectral points and Koliha-Drazin\\ invertible elements
in quotient Banach\\ algebras and homomorphism ranges\/}

\author {\normalsize Enrico   Boasso }

\date{ }


\maketitle \thispagestyle{empty} 


\setlength{\baselineskip}{12pt}

\begin{abstract}\noindent  In this article poles, isolated spectral points, group, Drazin and 
Koliha-Drazin invertible elements in the context of quotient Banach algebras or in ranges of (not necessarily continuous) homomorphism
between complex unital Banach algebras will be characterized using Fredholm and Riesz Banach algebra elements.
Calkin algebras on Banach and Hilbert spaces will be also considered.\par\vskip.5truecm
\noindent\it Keywords: \rm Isolated spectral point, pole, Koliha-Drazin inverse, Drazin inverse, Banach algebra,
Calkin algebra.
\end{abstract}


\section {\sfstp Introduction}\setcounter{df}{0}

In the  article \cite{Bel}, given a separable infinite dimensional Hilbert space $\H$ and $T\in \L (H)$, it was characterized when zero is a pole of the resolvent of $\pi (T)\in \C(\H)$ 
in terms of the operator $T\in \L (\H)$, where $\C(\H)$ is the Calkin algebra and $\pi\colon \L (\H)\to \C (\H)$ is the quotient map.
In fact, zero is a pole of the resolvent of $\pi (T)\in \C(\H)$ if and only if there are operators $A$, $B$ and $K\in \L(\H)$ such that $A$ is nilpotent,
$B$ is Fredholm, $K$ is compact and $T=A\oplus B+K$ (see \cite[Theorem 2.2]{Bel}). It is worth noticing that according to \cite[Proposition 1.5]{Lu} or \cite[Theorem 12]{Bo},
given a complex unital Banach algebra $\A$, the set of the poles of the resolvent of $a\in \A$ coincide with $\{\lambda \in \hbox{ iso }\sigma (a)\colon \hbox{ such that } a-\lambda 1
\hbox{ is Drazin invertible}\}$, where iso $\sigma (a)$ denotes the set of isolated points of the spectrum $\sigma (a)$ and $1\in \A$ denotes the 
identity of $\A$. Furthermore, according to \cite[Proposition 1.5]{Lu}, 
iso $\sigma (a)=\{\lambda \in \sigma (a)\colon \hbox{ such that } a-\lambda 1
\hbox{ is Koliha-Drazin invertible}\}$.\par

\indent On the other hand, recall that according to Atkinson's theorem, necessary and sufficient for a bounded linear operator on a Banach space
to be  Fredholm is that its coset in the  Calkin algebra is invertible.  Motivated for this result, an abstract Fredholm theory in 
arbitrary complex unital Banach algebras with respect to an inessential ideal was developed, see for example  \cite{A, BMSW} and the
bibliography in these monographs.
In addition,  a Fredholm theory relative to Banach algebra
homomorphisms was introduced in \cite{H1}. Many authors have developed this theory studying for example Fredholm, Weyl,
Browder and Riesz Banach algebra element relative to homomorphisms, see for example \cite{H1, BMSW, H3, H2, H4, MR1, GR, MR, D, H5, KMSD, Bak, ZH, MMH,
ZDH1, ZDH2}.\par

\indent The main objective of this article is to characterize isolated spectral points and Koiha-Drazin invertible elements 
in quotient Banach algebras or in ranges of (not necessarily continuous) homomorphism between Banach algebras. In fact, in section 3, after having recalled some peliminary definitions and facts in section 2,
given two complex unital Banach algebras $\A$ and $\B$, an algebra homomorphism $\T\colon \A\to \B$,
$a\in\A$ and $b=\T (a)$, poles and isolated spectral points of $b\in\B$ will be characterized in terms of Fredholm and Riesz elements  relative to $\T$.
In particular, Drazin and Koliha-Drazin invertible elements in the range of $\T$ will be characterized (see Theorem \ref{thm42} and Corollary \ref{cor43}).  
It is worth noticing that the problem of lifting idempotents will be central in the proof of the aforementioned results. This
problem has been considered by many authors and in general can not be solved, see for example \cite{O, Choi, Bar, Ba, R, D, Had, Kon}.
However, when the lifting is possible, for example when $\T\colon\A\to \B$ is surjective and has the Riesz property (see section 2), $\B=\A/\J$  ($\J\subset \A$
is the radical or a closed inessential ideal and $\T$ is the quotient map $\pi \colon\A\to \A/\J$),  $\A$ is a von Neumann or a real rank zero $C^*$-algebra,
and $\B=\A/\J$ ($\J$ a closed two sided ideal and  $\T=\pi$), or $\A$ is a unital $C^*$-algebra, $\B=\A/\J$ and $\J$ is a closed two sided ideal that is an $\hbox{\rm AW}^*$ algebra  or an AF $C^*$-algebra
(see Remark \ref{rem44}), then poles, isolated spectral points, group, Drazin and Koliha-Drazin invertible elements in $\B$
will be fully characterized using Fredholm and Riesz elements relative to $\T$ or $\pi$.   

\indent In addition, in the Banach space operator context, the following characterization will be proved (compare with \cite{Bel}). Let $\X$ be a Banach space, $T\in \L(\X)$ and $\pi(T)\in\C(\X)$. Then,
$\lambda\in $ iso $\sigma (\pi (T))$ (respectively $\lambda$ is a pole of $\pi (T)$) if and ony if  there exists an idempotent
$P\in \L(\X)$, $P\notin\K (\X)$, such that if $\X_1=P(\X)$  and $\X_2=(I-P) (\X)$, then there are $T_1\in \L(\X_1)$ a Riesz operator (respectively 
a power compact operator),
$T_2\in \L (\X_2)$ a Fredholm operator and $K\in \K(\X)$ such that $T-\lambda=T_1\oplus T_2+K$, where $\K(\X)$ is the 
closed ideal of compact operators. Naturally, the case $\lambda=0$ provides a characterization of
Koliha-Drazin and Drazin invertible elements in the Calkin algebra $\C(\X)$, respectively. In
the frame of arbitrary Hilbert space operators and for isolated points of the spectrum (respectively poles) of Calkin algebra elements,
it will be proved that the operator $T_1$ can be chosen as quasi-nipotent(respectively nilpotent). This last result will be also formulated in
$C^*$-algebras. Finally, as an application of the main results, Drazin and Koliha-Drazin invertible Calkin algebra elements will
be described in terms of the cosets of operator classes. \par

\section {\sfstp Preliminary definitions and facts}\setcounter{df}{0}

From now on $\A$ will denote a complex unital Banach algebra with identity $1$. Let 
$\A^{-1}$  denote the set of all invertible elements in $\A$ and $\A^\bullet$ the set of all 
idempotents in $\A$. Given $a\in \A$, $\sigma (a)$,
$\rho (a)=\mathbb{C}\setminus \sigma (a)$  and iso $\sigma (a)$ will stand for the spectrum, the resolvent set and  the set of isolated spectal points of $a\in \A$,
respectively. Recall that if $K\subseteq \mathbb{C}$, then acc $K$ is the set of limit points of $K$ and
iso $K=K\setminus$ acc $K$. \par

As in the case of   
the spectrum of a Banach space operator, the resolvent function of 
$a\in \A$, $R(\cdot, a)\colon\rho(a)\to \A$, is holomorphic and iso $ \sigma(a)$ is the set of
isolated singularities of $R(\cdot ,  a)$. Furthermore, as in the case of an operator,
see \cite[p. 305]{T}, if $\lambda_0\in$ iso $\sigma(a)$, then it is possible to
consider the Laurent expansion of  $R(\cdot ,  a)$ in terms of $(\lambda-\lambda_0)$.
In fact,
$$
R(\lambda ,  a)=\sum_{n\ge 0}a_n(\lambda-\lambda_0)^n +\sum_{n\ge 1}b_n(\lambda-\lambda_0)^{-n},
$$
where $a_n$ and $b_n$ belong to $\A$ and they are obtained in a standard way using the
functional calculus. In addition, this representation is valid when $0<\mid \lambda-\lambda_0\mid<\delta$, for 
any $\delta$ such that $\sigma(a)\setminus \{\lambda_0\}$ lies outside the circle $\mid\lambda-\lambda_0\mid=\delta$. 
What is more important, the discussion of \cite[pp. 305 and 306]{T}
can be repeated for elements in a unital Banach algebra. Consequently, $\lambda_0$
will be called \it a pole of order $p$ of   $R(\cdot ,  a)$, \rm if there exists $p\ge 1$ such that
$b_p\neq 0$ and $b_m=0$, for all $m\ge p+1$. The set of poles of $a\in \A$
will be denoted by $\Pi(a)\subseteq$ iso $\sigma (a)$.
Define  $I(a)=$ iso $\sigma(a)\setminus \Pi (a)$.\par

\indent Let $\X$ be a Banach space and $\L(\X)$ denote  the algebra of all bounded and linear maps
defined on and with values in $\X$. If $T\in \L(\X)$, then $N(T)$ and $R(T)$ will stand for the null space and the
range of $T$ respectively. Note that $I\in \L(\X)$ will stand for the identity map. Recall that the \it descent \rm and the \it ascent \rm of $T\in \L(\X)$ are
$d(T) =\hbox{ inf}\{ n\ge 0\colon  R(T^n)=R(T^{n+1})\}$ and
$ a(T)=\hbox{ inf}\{ n\ge 0\colon  N(T^n)=N(T^{n+1})\}$
respectively, where if some of the above sets is empty, its infimum is then defined as $\infty$, see  
for example \cite{K}. Now well, according to \cite[Lemma 3.4.2]{CPY}, $\Pi(T)=\{\lambda\in\sigma (a)\colon
a(T-\lambda)\hbox{ and } d(T-\lambda) \hbox{ are finite}\}$. What is more, if $\A$ is a unital Banach algebra and $a\in \A$, 
then, according to  \cite[Theorem 11]{Bo}, $\Pi (a)=\Pi (L_a)=\Pi (R_a)$,
where $L_a\in \L (\A)$ and $R_a\in \L(\A)$ are the operators defined by left and right multiplication respectively,
i.e., given $x\in \A$, $L_a(x)=ax$ and $R_a(x)=xa$.\par

Recall that
$a\in \A$ is said to be \it Drazin invertible, \rm if there exists a necessarily unique $b\in \A$
and some $m\in\n$ such that
$$ab=ba, \hskip.3truecm bab=b, \hskip.3truecm a^mba=a^m.$$

\noindent If the Drazin inverse of $a$ exists, then it will be denoted by $a^d$. In addition,
the  \it index of $a$,\rm  which will be denoted by  ind $(a)$, is the least non-negative integer $m$ for which the
above equations hold. When ind $(a)=1$,
$a$ will be said to be \it group invertible\rm, and in this case its Drazin inverse will be refered as the group inverse
of $a$. The set of Drazin invertible elements of $\A$ will be denoted
by $\A^D$; see \cite{Dr, K, RS}.\par

In addition, in \cite{BS} the \it Drazin spectrum \rm of $a\in \A$ was introduced, 
i.e., $\sigma_{D}(a)=\{\lambda\in \mathbb{C}\colon a-\lambda\hbox{ is not Drazin invertible}\}$.  
Recall that according to \cite[Theorem 12]{Bo}, see also \cite[Proposition 1.5]{Lu},
$\sigma (a)=\sigma_{D} (a)\cup \Pi (a)$,
$\sigma_{D} (a)\cap \Pi (a)=\emptyset$ and $\sigma_{D} (a)=$ acc $\sigma (a)\cup I(a)$.
In particular,  if $ \rho_{\mathcal{D}}(a)=\mathbb{C}\setminus \sigma_{D}(a)$,
then $\Pi (a)=\sigma (a)\cap  \rho_{\mathcal{D}}(a)$, equivalently,
$a\in \A$ is Drazin invertible but not
invertible if and only if $0\in\Pi (a)$; see also \cite[Lemma 3.4.2]{CPY} and \cite[Theorem 4]{K}.
Concerning the Drazin spectrum, see \cite{BS, Lu, Bo}.

An element $a\in \A$ is said to be \it generalized Drazin \rm  or \it Koliha-Drazin invertible\rm,
if there exists $b\in A$ such that
$$ab=ba,\hskip.3truecm bab=b, \hskip.3truecm aba=a+w, \hbox{ ($w\in\A^{qnil}$)},$$

\noindent where $\A^{qnil}$ is the set of all quasinilpotent elements of $\A$. The Koliha-Drazin inverse is unique, if it exists,
and it will be denoted by $a^D$; see \cite{Ko2,Lu, R2, R3}.
Note that if $a\in \A$ is Drazin invertible, then $a$ is generalized Drazin invertible. In fact, 
if $k=$ ind $(a)$, then $(aba-a)^k=0$. In addition, the Koliha-Drazin spectrum was introduced and studied in
 \cite{Lu}, i.e., $\sigma_{KD}(a)=\{\lambda\in \mathbb{C}\colon a-\lambda\hbox{ is not Koliha-Drazin invertible}\}$.
According to \cite[Proposition 1.5]{Lu} (see also \cite[Theorem 4.2]{Ko2}), $\sigma_{KD}(a)=$ acc $\sigma (a)$. In particular,
necessary and sufficient for $a\in \A$ to be Koliha-Drazin invertible but not invertible is that
$0\in$ iso $\sigma (A)$. What is more, $a\in \A$ is generalized Drazin invertible but not
Drazin invertible if and only if $a\in I(\A)$. The set of Koliha-Drazin invertible 
elements of $\A$ will be denoted by $\A^{KD}$.

In the following Remark characterizations of the elements in $\Pi (a)$ and $I(a)$, $a\in \A$, will be recalled.
 \par

\begin{rema}\label{rem11}\rm Let $A$ be a unital Banach algebra and consider $a\in A$. Recall that
if $0\neq p=p^2\in A$, then $pAp$ is a unital Banach algebra with unit $p$. \par
\noindent (i) Let $\lambda\in \mathbb{C}$. Then, $\lambda\in$ iso $\sigma(a)$ if and only if there is $0\neq p^2=p\in A$
 such that $ap=pa$, $(a-\lambda)p$ is quasi-nilpotent  and $(a-\lambda+p)\in A^{-1}$.\par
\noindent (ii) Necessary and sufficient for $\lambda\in \mathbb{C}$ to belong to $\Pi (a)$ is that
there exists $0\neq p^2=p\in A$ such that $ap=pa$, $(a-\lambda)p$ is nilpotent and $(a-\lambda+p)\in A^{-1}$.
Note in particular that $((a-\lambda)p)^l=0$ if and only if $l\ge k=$ ind $(a-\lambda)$.\par
\noindent (iii) The number $\lambda\in \mathbb{C}$ belongs to $I(a)$ if and only if
there is $0\neq p^2=p\in A$ such that $ap=pa$, $(a-\lambda)p$ is quasi-nilpotent but not nilpotent and $(a-\lambda+p)\in A^{-1}$.
\par
\noindent Statements (i)-(ii) are well known, see for example \cite[Proposition 1(a)]{RS}, \cite[Theorem 4.2]{Ko2} and \cite[Theorem 1.2]{Ko}. Statement (iii) is a direct consequence of 
statements (i) and (ii). In addition,  $p\in \A^\bullet$ is the \it spectral idempotent of $a$ corresponding to $\lambda$. \rm
To learn more results on isolated spectral points, see for example \cite{M,S}.\end{rema}

\indent Let $\X$ be an infinite dimensional complex Banach space and denote by $\K(\X)\subset \L(\X)$
the closed ideal of compact operators. Consider $\mathcal{C}(\X)$ the Calkin algebra over $X$,
i.e.,  the quotient algebra $\mathcal{C}(\X)=\L(\X)\K(\X)$. Recall that $\mathcal{C}(X)$ is itself a 
Banach algebra with the quotient norm. Let
$$
\pi\colon \L(\X)\to \mathcal{C}(\X)
$$
denote the quotient map.\par 

\indent  Recall that  $T\in \L(\X)$ is said to be a
\it semi-Fredholm operator, \rm if $R(T)$ is closed and $\alpha (T)= $ dim $N(T)$ or $\beta (T)=$ dim $X/R(T)$ is
finite, while if $\alpha (T)$ and $\beta (T)$ are both finite, $T$ is said to be \it Fredholm. \rm   Denote by $\Phi (\X)$ the set of all Fredholm operators defined on $\X$. It
is well known that $\Phi (\X)$ is a multiplicative open semigroup in $\L(\X)$ and that
$$
\Phi (\X)= \pi^{-1}(\mathcal{C}(\X)^{-1}).
$$

\indent  Given $T\in \L(\X)$, $T$ will be said to be \it power compact\rm, if
there exists $n\in \n$ such that $T^n\in \K(\X)$, equivalently, if $\pi (T)\in \mathcal{C}(X)$
is nilpotent. The set of all power compact operators defined on $\X$ will be denoted by $\mathscr{PK}(\X)$. In addition, $T\in \L(\X)$ will be said to be a \it Riesz operator\rm,
if $\pi (T)\in \mathcal{C}(\X)$ is quasi-nilpotent, equivalently 
$\sigma_e (T)=\sigma (\pi(T))=\{0\}$, where $\sigma_e (T)$ denotes the Fredholm spectrum of $T$ (see \cite{W1, W2} and \cite[Chapter 3]{CPY}).
Let $\mathscr{R}(\X)$ denote the set of Riesz operators defined on the Banach
space $\X$. It is clear that $\mathscr{PK}(\X)\subseteq \mathscr{R}(\X)$.\par

\indent Recall that according to the proof of \cite[Theorem 1]{R}, 
$$
\pi^{-1}(\mathcal{C}(\X)^\bullet)=\L(\X)^\bullet + \K(\X).
$$

\indent Atkinson's theorem motivated the development of an abstract Fredholm theory in 
arbitrary complex unital Banach algebras with respect to an \it inessential ideal, \rm i.e., a two sided
ideal $\J\subseteq \A$ such that for every $z\in \J$, $\sigma (z)$ is either finite or is a sequence
converging to zero. In fact,  $x\in \A$ is said to be a \it $\J$-Fredholm element \rm (or a \it Fredholm element of $\A$ relative to $\J$\rm),
if there exists $y\in \A$ such that $xy-1\in\J$ and $yx-1\in \J$. The set of all $\J$-Fredholm
elements will be denoted by $\Phi_{\J}(\A)$. In addition, given $\K\subset A$ a closed proper two sided ideal of $\A$,
$x\in \A$ will be said to be a \it Riesz element of $\A$ relative to  $\K$, \rm if $\pi (x)\in \A/\K$ is quasi-nilpotent,
where $\pi\colon \A\to \A/\K$ is the quotient map. The set of Riesz elements of $\A$ relative to $\K$
will be denoted $\mathscr{R}_{\K}(\A)$. To learn the main results concerning the Fredholm and the Riesz theory in
Banach algebras see \cite[Chapter 5]{A}, \cite{BMSW} and the corresponding bibliography of these monographs.
Recall that, according to \cite[Lemma 1]{Ba},  when the inessential ideal $\J$ is closed, 
$$
\pi^{-1}((\A/\J)^\bullet) =\A^\bullet+\J.
$$

\indent Atkinson's theorem also motivated the Fredholm theory relative to homomorphisms between two Banach algebras.  
In fact, let $\A$ and $\B$ be two unital Banach algebras and consider
$\T\colon \A\to \B$ an algebra homomorphism, not necessarily continue. Then, $\Phi_{\T}(\A)=\{a\in \A\colon \T(a)\in \B^{-1}\}$ is
the set of  Fredholm elements relative to the homomorphism $\T$,
see \cite{H1}. Moreover, Browder, Weyl and other classes of elements relative to $\T$ have been also intoduced and studied,
see for example \cite{H1, H3, H2, H4, MR1, GR, MR, D, H5, KMSD, Bak, ZH, MMH, ZDH1, ZDH2}.
To prove the main results of this article, it is important to recall another class of elements.
Let $\mathscr{R}_{\T}(A )=\{a\in \A\colon \T(a)\in \B^{qnil}\}$ be
the set of Riesz elements relative to the homomorphism $\T$, see \cite{ ZDH1, ZDH2}.
A subset of this class is the set of $\T$-nilpotent elements, i.e., $\mathscr{N}_{\T}(\A )=\{a\in \A\colon \hbox{ there exists } k\in\mathbb{N}
\hbox{ such that }a^k\in N(\T)\}$. Clearly, $\mathscr{N}_{\T}(\A )\subseteq \mathscr{R}_{\T}(A )$. \par

\indent Furthermore,   \cite[Theorem 1]{R}  was generalized to Banach algebras. In fact, 
consider $\A$ and $\mathcal{B}$  two complex unital Banach algebras and
$\mathcal{T}\colon \A\to \mathcal{B}$ an algebra homomorphism.
It will be said that  $\mathcal{T}$ has the \it Riesz property\rm, if $N(\mathcal{T})$
is an inessential ideal.
Then, according to \cite[Lemma 2]{D}, if $\mathcal{T}\colon \A\to \mathcal{B}$
has the Riesz property,
$$
\mathcal{T}^{-1}(B^\bullet)= A^\bullet + N(\mathcal{T}).
$$

Note that when $\A$ is a Banach algebra and $\J\subset\A$ is a closed  inessential ideal, then the
quotient map $\pi\colon \A\to \A/\J$ is a surjective Banach algebra homomorphism that has the Riesz property.
Moreover, $\Phi_{\pi}(\A)=\Phi_{\J}(\A)$ and $\mathscr{R}_{\pi}(\A)=\mathscr{R}_{\K}(A )$.
What is more, $\mathscr{N}_{\pi}(\A )=\{a\in \A\colon \hbox{ there exists } k\in\mathbb{N}\hbox{ such that }a^k\in \J\}$.
On the other hand, if $\A$ and $\B$ are two Banach algebras and $\T\colon \A\to\B$ is a surjective Banach
algebra homomorphism that has the Riesz property, then $\J=N(\T)$ is a closed inessential ideal of $\A$ and it is not difficult to prove that
$\Phi_{\T}(\A)=\Phi_{\J}(\A)$ and $\mathscr{R}_{\T}(\A)=\mathscr{R}_{\J}(\A)$.

\indent To fully characterize Drazin and Koliha-Drazin invertible Calkin algebra elements, some classes of operators
need to be considered.\par

\indent Recall that an operator $T\in\L (\X)$ is said to be \it semi-regular, \rm if $R(T)$ is closed and
$N(T^n)\subseteq R(T)$, for all $n\in\mathbb N$, see for example \cite{M1,M2, Mu, MM}. In addition, in \cite{Kat} it was proved that
given a semi-Fredholm operator $T\in \L(\X)$, there exist $M$ and $N$ two closed  subspaces of $\X$ invariant for $T$ such that
$\X= M\oplus N$,  $T\mid_{M}$ is nilpotent and $T\mid_{N}$ is semi-regular. This decomposition
is known as the \it Kato decomposition, \rm and the  operators satifying these conditions, which were characterized in \cite{La}, are said to be \it the quasi-Fredholm operators, \rm
see \cite{La, MM, Mu2}.\par

\indent On the othe hand, an operator $T\in \L (\X)$ is said to be \it B-Fredholm, \rm if there is $n\in \mathbb N$
such that $R(T^n)$ is closed and $T\mid_{R(T^n)}\in \L (R(T^n))$ is Fredholm, see \cite{Ber1, Ber2}.
In addition, according to \cite[Proposition 2.6]{Ber1}, a B-Fredholm operator is quasi-Fredholm;
what is more, according to \cite[Theorem 2.7]{Ber1}, if $T\in \L (\X)$ is B-Fredholm, then there exist
$M$ and $N$ two closed  subspaces of $\X$ invariant for $T$ such that
$\X= M\oplus N$,  $T\mid_{M}$ is nilpotent and $T\mid_{N}$ is Fredholm (see also \cite[Theorem 7]{Mu2}).  
The class of B-Fredholm operators defined on the Banach space $\X$ will be denoted by $\mathscr{BF}(\X)$.\par

\indent Quasi-Fredholm operators were generalized to pseudo-Fredholm operators.
In fact, given $T\in \L(\X)$, $T$ will be said to be a \it pseudo-Fredholm operator, \rm
if there exist $M$ and $N$ two closed spaces of $\X$ invariant for $T$ such that
$\X= M\oplus N$, $T\mid_{M}$ is quasi-nilpotent and $T\mid_{N}$ is semi-regular. 
This decomposition is called the \it generalized Kato decomposition, \rm see \cite{M0, M, M3, M4, Bou}.
In addition, an operator $T\in \L (\X)$ will be said to be \it pseudo B-Fredholm, \rm if there are two closed
subspaces of $\X$ invariant for $T$ such that 
$\X= M\oplus N$, $T\mid_{M}$ is quasi-nilpotent and $T\mid_{N}$ is is Fredholm. Let $\mathscr{PBF}(X)$
denote the class of pseudo B-Fredholm operators defined on $\X$. It is clear that
a pseudo B-Fredholm operator is pseudo-Fredholm. 

\indent Recall that according to \cite[Theorem 4]{K}, if $T\in \L(\X)$ is Drazin invertible, then
there exist two closed subspaces of $X$ invariant for $T$ such that $\X= M\oplus N$, $T\mid_{M}$ is nilpotent and $T\mid_{N}$ is invertible.
In particular, a Drazin invertible operator is B-Fredholm and B-Fredholm operators not only generalize Fredholm operators, but also Drazin invertible 
operators. In the next section it will be proved that the set of all cosets of B-Fredholm operators in a Calkin
algebra on an arbitrary Hilbert space coincides with the set of all Drazin invertible elements of the Calkin algebra.
Moreover,  the set of all Koliha-Drazin invertible elements of the same type of Calkin algebra will
be described using the class of pseudo B-Fredholm operator.
 
\section {\sfstp Main results}\setcounter{df}{0}

\indent To prove the main results of this work, first some facts need to be recalled.\par

\begin{rema}\label{rem41}\rm Let $\A$ be a unital Banach algebra and consider $a\in \A$. 
If  $0\neq p=p^2\in \A$, then define $p'=1-p$. \par
\noindent (i) Note that $a=pap+pap'+p'ap+p'ap'$.\par
\noindent (ii) Suppose that $ap=pa$, equivalently, $pa(1-p)=(1-p)ap=0$. 
Then, it is not difficult to prove that necessary and sufficient for $a$ to be invertible  is that
$pap\in (p\A p)^{-1}$ and $p'ap'\in (p'\A p')^{-1}$.\par
\noindent (iii) Let $\B$ another unital Banach algebra and consider $\T\colon \A\to \B$ an algebra homomorphism.
Suppose that $p\notin N(\T)$. Then, if $q=T(p)$ and $\T_{p,q}=\T\mid_{p\A p}^{q\B q}\colon p\A p\to q\B q$, $0\neq q=q^2$ and  $\T_{p,q}$ is an
algebra homomorphism between the unital Banach algebras $p\A p$ and  $q\B q$. What is more, if $\T$ is surjective, then $\T_{p,q}$
is surjective.
\end{rema}

\indent Next the main result will be presented. Note that the notation introduced in Remark \ref{rem41} will be used in this section.\par

\begin{thm}\label{thm42}Let $\A$ and $\B$ be two unital Banach algebras and consider an 
algebra homomorphism $\T\colon\A\to B$. Let $\lambda\in \mathbb{C}$ and $a\in \A$ and $b\in\B$ such that
$\T (a)=b$. Then, the following statements hold.\par
\noindent \rm (i) \it Suppose that  there exists an idempotent
$p\in \A$, $p\notin N(\T)$, such that $p(a-\lambda)(1-p)$ and $(1-p)(a-\lambda)p\in N(\T)$,
$(1-p)(a-\lambda)(1-p)\in\Phi_{\T_{p',q'}}(p'\A p')$ 
and $p(a-\lambda)p\in \mathscr{R}_{\T}(\A )$ \rm(\it respectively $p(a-\lambda)p\in \mathscr{N}_{\T}(\A )$, $p(a-\lambda)p\in \mathscr{R}_{\T}(\A )\setminus \mathscr{N}_{\T}(\A )$\rm)\it.
Then, $\lambda\in $ \rm iso \it $\sigma (b)$ \rm(\it respectively $\lambda\in \Pi(b)$, $\lambda\in I(b)$\rm)\it.  In particular, $0\neq q=\T(p)$ is the spectral idempotent of $b$
corresponding to $\lambda$.   \par
\noindent \rm (ii) \it Let $\lambda\in $ \rm iso \it $\sigma (b)$ \rm(\it respectively $\lambda\in \Pi(b)$, $\lambda\in I(b)$\rm)\it and let $0\neq q\in B$ the spectral  idempotent
of $b$ corrsponding to $\lambda$. If there exists $p\in A^\bullet$ such that $\T(p)=q$, then $p\notin N(\T)$,
$p(a-\lambda)(1-p)$ and $(1-p)(a-\lambda)p\in N(\T)$, 
$(1-p)(a-\lambda)(1-p)\in\Phi_{\T_{p',q'}}(p'\A p')$ 
and $p(a-\lambda)p\in \mathscr{R}_{\T}(\A )$ \rm(\it respectively $p(a-\lambda)p\in \mathscr{N}_{\T}(\A )$, $p(a-\lambda)p\in \mathscr{R}_{\T}(\A )\setminus \mathscr{N}_{\T}(\A )$\rm)\it.
\end{thm}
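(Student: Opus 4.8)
The plan is to transport, through $\T$, the four algebraic conditions imposed on $a$ relative to the idempotent $p$ into the three properties that Remark~\ref{rem11} attaches to the spectral idempotent of $b$ at $\lambda$. Write $q=\T(p)$ and $q'=1-q$; since $\T$ is unital one has $\T(1-p)=q'$ and $\T(a-\lambda)=b-\lambda$, and by Remark~\ref{rem41}(iii) the element $q$ is a nonzero idempotent as soon as $p\notin N(\T)$. The guiding observation is that applying $\T$ to the Peirce decomposition of $a-\lambda$ relative to $p$ produces the Peirce decomposition of $b-\lambda$ relative to $q$, so each hypothesis becomes a statement about one of the corners $q\B q$ or $q'\B q'$.

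For part (i) I would first apply $\T$ to the two nullity hypotheses: from $p(a-\lambda)(1-p)\in N(\T)$ and $(1-p)(a-\lambda)p\in N(\T)$ one gets $q(b-\lambda)q'=0$ and $q'(b-\lambda)q=0$, which by Remark~\ref{rem41}(ii) is precisely the commutation $bq=qb$. Applying $\T_{p',q'}$ to $(1-p)(a-\lambda)(1-p)\in\Phi_{\T_{p',q'}}(p'\A p')$ shows that $q'(b-\lambda)q'$ is invertible in $q'\B q'$, while applying $\T$ to $p(a-\lambda)p\in\mathscr{R}_{\T}(\A)$ shows that $q(b-\lambda)q=\T(p(a-\lambda)p)$ is quasi-nilpotent. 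Since $bq=qb$ gives $(b-\lambda)q=q(b-\lambda)q$, the element $(b-\lambda)q$ is quasi-nilpotent (the spectrum of an element of $q\B q$ computed in $\B$ and in $q\B q$ differ at most by the point $0$).

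It then remains to check $(b-\lambda+q)\in\B^{-1}$, which is the one genuinely algebraic step. The element $c:=b-\lambda+q$ commutes with $q$ because $b-\lambda$ does, so by Remark~\ref{rem41}(ii) its invertibility decouples into that of its two corners. One computes $q'cq'=q'(b-\lambda)q'$, invertible in $q'\B q'$ by the previous paragraph, whereas $qcq=q(b-\lambda)q+q$ is the identity of $q\B q$ plus a quasi-nilpotent element and hence invertible in $q\B q$. Thus $c\in\B^{-1}$, and Remark~\ref{rem11}(i) yields $\lambda\in$ iso $\sigma(b)$ with spectral idempotent $q$. The two variants follow by the same bookkeeping: if $p(a-\lambda)p\in\mathscr{N}_{\T}(\A)$ then $(q(b-\lambda)q)^{k}=\T((p(a-\lambda)p)^{k})=0$ for some $k$, so $(b-\lambda)q$ is nilpotent and Remark~\ref{rem11}(ii) gives $\lambda\in\Pi(b)$; and $p(a-\lambda)p\in\mathscr{R}_{\T}(\A)\setminus\mathscr{N}_{\T}(\A)$ forces $(b-\lambda)q$ quasi-nilpotent but not nilpotent, whence $\lambda\in I(b)$ by Remark~\ref{rem11}(iii).

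Part (ii) is the reverse translation and needs no new idea. From $q\neq 0$ one gets $p\notin N(\T)$; Remark~\ref{rem11} then supplies $bq=qb$, the (quasi-)nilpotency of $(b-\lambda)q$ in the appropriate case, and $b-\lambda+q\in\B^{-1}$. Commutation gives $q(b-\lambda)q'=q'(b-\lambda)q=0$, i.e.\ $p(a-\lambda)(1-p),(1-p)(a-\lambda)p\in N(\T)$; decoupling $b-\lambda+q$ as above shows $q'(b-\lambda)q'$ invertible in $q'\B q'$, i.e.\ $(1-p)(a-\lambda)(1-p)\in\Phi_{\T_{p',q'}}(p'\A p')$; and the (quasi-)nilpotency of $q(b-\lambda)q=\T(p(a-\lambda)p)$ gives membership in $\mathscr{R}_{\T}(\A)$, $\mathscr{N}_{\T}(\A)$ or their difference. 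The main obstacle is conceptual rather than computational: correctly matching each relative-Fredholm, relative-Riesz and relative-nilpotent hypothesis with the correct defining property of the spectral idempotent, and in particular recognizing that the single condition $b-\lambda+q\in\B^{-1}$ splits, via Remark~\ref{rem41}(ii), into an automatically satisfied $q$-corner condition and the genuine $q'$-corner invertibility furnished by the relative-Fredholm hypothesis.
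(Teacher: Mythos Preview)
Your proposal is correct and follows essentially the same approach as the paper: both arguments push the Peirce decomposition of $a-\lambda$ through $\T$, verify commutation of $q$ with $b$, obtain (quasi-)nilpotency of $(b-\lambda)q$ from the Riesz hypothesis, and check $(b-\lambda+q)\in\B^{-1}$ by splitting it into its $q$- and $q'$-corners via Remark~\ref{rem41}(ii). The only cosmetic difference is that the paper obtains $(b-\lambda)q\in\B^{qnil}$ by first noting $(a-\lambda)p=p(a-\lambda)p+(1-p)(a-\lambda)p\in\mathscr{R}_{\T}(\A)$, whereas you first show $q(b-\lambda)q$ is quasi-nilpotent and then invoke commutation; this is the same computation in a different order.
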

\begin{proof}(i). Suppose that  there exists an idempotent $p\in \A$, $p\notin N(\T)$, such that $p(a-\lambda)(1-p)$ and $(1-p)(a-\lambda)p\in N(\T)$, $p(a-\lambda)p\in \mathscr{R}_{\T}(\A )$ and $(1-p)(a-\lambda)(1-p)\in\Phi_{\T_{p',q'}}(p'\A p')$. Clearly, $q=\T (p)$ is and idempotent such that $0\neq q$ and $qb=bq$.
In addition, since $(a-\lambda)p= p(a-\lambda)p + (1-p)(a-\lambda)p$, $(1-p)(a-\lambda)p\in N(\T)$ and $p(a-\lambda)p\in \mathscr{R}_{\T}(\A )$, 
$(a-\lambda)p\in  \mathscr{R}_{\T}(\A )$, which in turn implies that $(b-\lambda)q
\in \B^{qnil}$. Moreover, it is clear that $(1-q)(b-\lambda)(1-q)\in (q'\B q')^{-1}$
($\T_{p',q'}\colon p'\A p'\to q'\B q'$ is as in \rm Remark \ref{rem41}(iii) ($p'=1-p$ and $q'=1-q$)). Now well, note that 
$(b-\lambda +q)= q(b-\lambda)q +q + (1-q)(b-\lambda)(1-q)$ and 
$q(b-\lambda +q)q=q(b-\lambda)q +q =(b-\lambda)q +q$. However, since $(b-\lambda)q\in \B^{qnil}$, 
$((b-\lambda)q +q)\in (q\B q)^{-1}$. Therefore, according to Remark \ref{rem41}(ii), $(b-\lambda +q)\in \B^{-1}$,
which, according to Remark \ref{rem11}(i), implies that $\lambda\in $ iso $\sigma (b)$.\par

\indent If $p(a-\lambda)p\in \mathscr{N}_{\T}(\A )$, then 
proceed as in the previous paragraph and note that since
$p(a-\lambda)p\in \mathscr{N}_{\T}(\A)$, then $q(b-\lambda)q=(b-\lambda )q\in \B$ is nilpotent. 
The remaining part of the implication
follows as for the case $p(a-\lambda)p\in \mathscr{R}_{\T}(\A )$  using in particular Remark \ref{rem11}(ii) instead of Remark \ref{rem11}(i)

\indent The case $p(a-\lambda)p\in \mathscr{R}_{\T}(\A )\setminus \mathscr{N}_{\T}(\A )$
is  a direct consequence of what has been proved.\par

\noindent  (ii). Suppose that $\lambda\in $ \rm iso $\sigma (b)$ and consider $0\neq q=q^2\in \B$ the spectral idempotnt of $b$ corresponding to $\lambda$.
In particular, $qb=bq$, $(b-\lambda)q$
is quasi-nilpotent and $(b-\lambda +q)\in \B^{-1}$. Suppose that there exists $p=p^2\in \A$ such that $\T (p)=q$.
As a result, $p\notin N(\T)$. Decompose $a\in A$, $\T (a)=b$, as follows: $a=pap + pa(1-p) +(1-p)ap+(1-p)a(1-p)$.
Note that since $qb=bq$ and $\T$ is an algebra homomorphism, $p(a-\lambda)(1-p)$ and $(1-p)(a-\lambda)p\in N(\T)$.
Moreover, since $\T (p(a-\lambda)p)=q(b-\lambda)q=(b-\lambda)q\in\B^{qnil}$, $p(a-\lambda)p\in\mathscr{R}_{\T}(\A)$.
Next consider $\T_{p',q'}\colon p'\A p'\to q'\B q'$.
Since $(b-\lambda +q)\in \B^{-1}$, it is not difficult to prove that $(1-q)(b-\lambda)(1-q)\in (q'\B q')^{-1}$ (Remark \ref{rem41}(ii)). Then, clearly, $(1-p)(a-\lambda)(1-p)\in\Phi_{\T_{p',q'}}(p' \A p')$.\par

\indent Suppose that $\lambda\in \Pi(b)$ and proceed as in the previous paragraph. 
Note that since $(q(b-\lambda)q)^k=((b-\lambda)q)^k=0$, $k=$ ind $(b-\lambda)$, 
$((a-\lambda)p)^k\in N(\T)$. In particular, $(a-\lambda)p  \in\mathscr{N}_{\T}(\A)$. The remaining part of the implication
follows as in the previous paragraph.\par
\indent  As in the proof of statement (i), the case $\lambda\in I(b)$
is  a direct consequence of what has been proved.
\end{proof}

\indent Next Drazin and Koliha-Drazin Banach algebra invertible elements in homomorphism ranges will be characterized.\par

\begin{cor}\label{cor43}Let $\A$ and $\B$ be two unital Banach algebras and consider an algebra
homomorphism $\T\colon\A\to B$. Let $a\in \A$ and $b\in\B$ such that
$\T (a)=b$. Then, the following statements hold.\par
\noindent \rm (i) \it Suppose that  there exists an idempotent
$p\in \A$, $p\notin N(\T)$, such that $pa(1-p)$ and $(1-p)ap\in N(\T)$,
$(1-p)a(1-p)\in\Phi_{\T_{p',q'}}(p'\A p')$ 
and $pap\in \mathscr{R}_{\T}(\A )$ \rm(\it respectively $pap\in \mathscr{N}_{\T}(\A )$, $pap\in \mathscr{R}_{\T}(\A )\setminus \mathscr{N}_{\T}(\A )$\rm)\it.
Then, $b\in\B$ is Koliha-Drazin invertible \rm but not invertible (\it respectively $b$ is Drazin invertible but not invertible, $b$ is Koliha-Drazin invertible but not Drazin invertible\rm)\it. \par
\noindent \rm (ii) \it Suppose that $b$ is Koliha-Drazin invertible but not invertible \rm(\it respectively Drazin invertible but not invertible, Koliha-Drazin invertible but not Drazin invertible)
and let $q\in B$ the spectral idempotent
of $b$ corresponding to $0$. If there exists $p\in A^\bullet$ such that $T(p)=q$, then $p\notin N(\T)$,
$pa(1-p)$ and $(1-p)ap\in N(\T)$, 
$(1-p)a(1-p)\in\Phi_{\T_{p',q'}}(p'\A p' )$ 
and $pap\in \mathscr{R}_{\T}(\A )$ \rm(\it respectively $pap\in \mathscr{N}_{\T}(\A )$, $pap\in \mathscr{R}_{\T}(\A )\setminus \mathscr{N}_{\T}(\A )$\rm)\it.
\end{cor}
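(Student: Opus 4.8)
The plan is to recognize that Corollary \ref{cor43} is nothing but the specialization of Theorem \ref{thm42} to the case $\lambda=0$, reformulated through the dictionary between spectral-point membership and generalized invertibility that was recalled in Section 2. I would therefore not reprove any of the idempotent machinery; the entire work is a matter of translation.

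First I would set $\lambda=0$ in Theorem \ref{thm42}, so that $a-\lambda=a$ and $b-\lambda=b$. With this substitution the idempotent hypotheses appearing in Corollary \ref{cor43} --- namely that $pa(1-p)$ and $(1-p)ap$ lie in $N(\T)$, that $(1-p)a(1-p)\in\Phi_{\T_{p',q'}}(p'\A p')$, and that $pap$ belongs to $\mathscr{R}_{\T}(\A)$ (respectively $\mathscr{N}_{\T}(\A)$, respectively $\mathscr{R}_{\T}(\A)\setminus\mathscr{N}_{\T}(\A)$) --- are word for word the hypotheses of Theorem \ref{thm42} with $\lambda=0$. Hence Theorem \ref{thm42}(i) gives, in the three respective branches, $0\in$ iso $\sigma(b)$, $0\in\Pi(b)$ and $0\in I(b)$, with $q=\T(p)$ the spectral idempotent of $b$ at $0$, while Theorem \ref{thm42}(ii) supplies the converse implications.

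It then remains to convert these three spectral-point statements into the invertibility statements of the corollary, and for this I would invoke the equivalences recorded in Section 2: $0\in$ iso $\sigma(b)$ holds precisely when $b$ is Koliha-Drazin invertible but not invertible; $0\in\Pi(b)$ holds precisely when $b$ is Drazin invertible but not invertible; and $0\in I(b)=$ iso $\sigma(b)\setminus\Pi(b)$ holds precisely when $b$ is Koliha-Drazin invertible but not Drazin invertible. Matching these three equivalences against the three respective branches of Theorem \ref{thm42}(i) yields statement (i), and the same equivalences read from right to left, fed into Theorem \ref{thm42}(ii), yield statement (ii). Since the argument is a direct corollary, there is no genuine obstacle; the only point demanding attention is keeping the three ``respectively'' branches aligned across the translation, so that the $\mathscr{R}_{\T}(\A)$, $\mathscr{N}_{\T}(\A)$ and $\mathscr{R}_{\T}(\A)\setminus\mathscr{N}_{\T}(\A)$ cases are matched, in the correct order, with the Koliha-Drazin-but-not-invertible, Drazin-but-not-invertible and Koliha-Drazin-but-not-Drazin cases.
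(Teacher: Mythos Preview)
Your proposal is correct and follows exactly the paper's approach: the paper's proof simply says the corollary is deduced from Theorem \ref{thm42} together with the equivalences $b$ Koliha-Drazin invertible but not invertible $\Leftrightarrow 0\in$ iso $\sigma(b)$ and $b$ Drazin invertible but not invertible $\Leftrightarrow 0\in\Pi(b)$, which is precisely the translation you carry out.
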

\begin{proof} The Corollary can be easily deduced from Theorem \ref{thm42} taking in consideration the following 
observations. Recall that $b\in\B$ is Koliha-Drazin invertible but not invertible (respectively Drazin invertible but not invertible)
if and only if $0\in$ iso $\sigma(b)$ (\cite[Theorem 4.2]{Ko2}) (respectively $0\in \Pi(b)$ (\cite[Proposition 1.5]{Lu} or \cite[Theorem 12(i)]{Bo})).
\end{proof}

Note that in Corollary \ref{cor43} (ii), if $b$ is group invertible, then according to Remark \ref{rem11}(ii), 
$pap\in N(\T)$.

To fully characterize poles, isolated points and Drazin and Koliha-Drazin invertible elements in quotient Banach algebras
or homomorphism ranges, it is necessary to be able to lift spectral idempotents. In the following remark some examples for 
which the statements in Theorem \ref{thm42} and Corollary \ref{cor43}
are equivalent will be presented.\par

\begin{rema}\label{rem44}\rm (i). Let $\A$ be a unital Banach algebra and consider $\mathscr{R}$ the radical of $\A$ and
$\pi\colon \A\to\A/\mathscr{R}$ the quotient map.
Then, according to \cite[Theorem 2.3.9]{Ri}, any idempotent in $\A/\mathscr{R}$ can be lifted to an idempotent in $\A$.\par
\noindent (ii) Let $\A$ be a unital Banach algebra and consider $\J\subset\A$ a closed inessential ideal. Let $\pi\colon \A\to \A/\J$. Then, according to 
 \cite[Lemma 1]{Ba},  $\pi^{-1}((\A/\J)^\bullet) =\A^\bullet+\J$. In the case of a $C^*$-algebra, see also \cite[Theorem 15]{Bar}.
Note that according to \cite[Theorem 2.3.5]{Ri}, the radical of a Banach algebra $\A$ is a closed  inessential ideal.\par

\noindent (iii). Let $\A$ and $\B$ be two unital Banach algebras and consider $\T\colon \A\to \B$ a surjective  algebra homomorphism
such that $\T$ has the Riesz property. Then, according to  \cite[Lemma 2]{D}, $\mathcal{T}^{-1}(B^\bullet)= A^\bullet + N(\mathcal{T})$.\par

\noindent (iv) Let $\A$ be a $C^*$-algebra and consider $\mathscr{I}\subseteq \A$ a closed two-sided ideal. Let $\pi\colon \A\to\A/\mathscr{I}$
be the quotient map. Given $b\in \A/\mathscr{I}$ such that $b^2=b$, according to \cite[Corollary 3]{Had} and \cite[Corollary 4]{Had},
if $\A$ has real rank zero or is a von Neumann algebra, then there exists $a\in \A$ such that $\pi (a)=b$ and $a^2=a$.  
The same result was proved when $\A$ is a $C^*$-algebra, $\pi\colon \A\to\A/\mathscr{I}$ is the quotient map and 
the closed two sided ideal $\mathscr{I}$ satisfies Condition (A) in \cite[p.  24]{Choi} (for example if 
$\mathscr{I}$ is an $\hbox{\rm AW}^*$ algebra (\cite{Kap}) or an AF $C^*$-algebra  (\cite{Brat})), see \cite[Theorem 1]{Choi}. \par

\noindent (v) Note that if $\A$ is a Banach algebra and $\K\subset \A$ is a closed two sided ideal, 
if $b^2-b\in \K$, in general there is no $a\in \A$, $a^2=a$ such that $a-b\in \K$. In fact, consider $\A=C[0,1]$ and
$\K=\{f\in C[0.1]\colon f(0)=f(1)=0\}$ (see \cite[p. 431]{Had}).\par
\end{rema}

\indent In the following theorem conditions that fully characterized Drazin and Koliha-Drazin invertible elements in quotient Banach algebras
or in Banach algebras that can be presented as the range of a (not necessarily continuous) homomorphism will be given. To this end, 
the following notion will be first introduced. Let $\A$ and $\B$ two complex unital Banach algebras
and consider $\T\colon \A\to\B$ a surjective algebra homomorphism. $\T$ will be said to have the \it lifting property, \rm if given
$q\in \B^\bullet$, there is $p\in\A^\bullet$ such that $\T(p)=q$, equivalently, $\T^{-1}(\B^\bullet)=\A^\bullet + N(\T)$.
Note that the homomorphism considered in Remark \ref{rem44}(i)-(iv) have the lifting property.

\begin{thm}\label{thm92}Let $\A$ and $\B$ be two unital Banach algebras and consider a surjective 
algebra homomorphism $\T\colon\A\to B$ such that $\T$ has the lifting property. Let $a\in \A$ and $b\in\B$ such that
$\T (a)=b$. Then, $b$ is Koliha-Drazin invertible but not invertible (respectively Drazin invertible but not invertible,
Koliha-Drazin invertible but not Drazin invertible) if and only if there exist $p^2=p\in \A$, $p\notin N(\T)$, $x\in \A$, $pxp\in\mathscr{R}_{\T}(\A)$   (respectively 
$pxp\in \mathscr{N}_{\T}(\A )$,  $pxp\in  \mathscr{R}_{\T}(\A )\setminus \mathscr{N}_{\T}(\A )$), $y\in (1-p)\A(1-p)$,   $y\in\Phi_{\T_{p',q'}}(p'\A p')$ ($q=\T (p)$) and 
$z\in N(\T)$ such that $a=pxp+y+z$.
\end{thm}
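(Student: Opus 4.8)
The plan is to deduce the theorem from Corollary~\ref{cor43} applied with $\lambda=0$: the lifting property is exactly what supplies the idempotent that Corollary~\ref{cor43}(ii) requires, and the only remaining work is to reconcile the corner decomposition used in Corollary~\ref{cor43} with the decomposition $a=pxp+y+z$ appearing here. Throughout I would use that $b$ is Koliha-Drazin invertible but not invertible iff $0\in$ iso $\sigma(b)$ (respectively Drazin invertible but not invertible iff $0\in\Pi(b)$), exactly as in the proof of Corollary~\ref{cor43}.

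For the forward implication I would assume $b$ is Koliha-Drazin invertible but not invertible, so that $0\in$ iso $\sigma(b)$, and let $q$ be the nonzero spectral idempotent of $b$ corresponding to $0$. Since $\T$ has the lifting property, there is $p\in\A^\bullet$ with $\T(p)=q$, so Corollary~\ref{cor43}(ii) applies and gives $p\notin N(\T)$, $pa(1-p),(1-p)ap\in N(\T)$, $(1-p)a(1-p)\in\Phi_{\T_{p',q'}}(p'\A p')$ and $pap\in\mathscr{R}_{\T}(\A)$ (with the corresponding $\mathscr{N}_{\T}$ or $\mathscr{R}_{\T}\setminus\mathscr{N}_{\T}$ membership in the respective cases). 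I would then repackage this by setting $x=a$, $y=(1-p)a(1-p)$ and $z=pa(1-p)+(1-p)ap$: since $N(\T)$ is an ideal, $z\in N(\T)$; clearly $y\in(1-p)\A(1-p)$ and $y\in\Phi_{\T_{p',q'}}(p'\A p')$; $pxp=pap$ lies in the required Riesz (resp. nilpotent) class; and $a=pxp+y+z$ by Remark~\ref{rem41}(i).

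For the converse I would assume such $p,x,y,z$ exist and verify the hypotheses of Corollary~\ref{cor43}(i) for $p$. Using $py=yp=0$ (as $y\in(1-p)\A(1-p)$) and writing $p'=1-p$, a short computation of the four corners of $a=pxp+y+z$ gives $pa(1-p)=pzp'$, $(1-p)ap=p'zp$, $pap=pxp+pzp$ and $(1-p)a(1-p)=y+p'zp'$, where every corner $pzp,pzp',p'zp,p'zp'$ of $z$ again lies in the ideal $N(\T)$. Hence $pa(1-p),(1-p)ap\in N(\T)$; since $\T(pap)=\T(pxp)$, the element $pap$ lies in the same class ($\mathscr{R}_{\T}$, $\mathscr{N}_{\T}$, or $\mathscr{R}_{\T}\setminus\mathscr{N}_{\T}$) as $pxp$; and since $\T_{p',q'}\bigl((1-p)a(1-p)\bigr)=\T_{p',q'}(y)$, we get $(1-p)a(1-p)\in\Phi_{\T_{p',q'}}(p'\A p')$. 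Corollary~\ref{cor43}(i) then yields the asserted conclusion about $b$.

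The only delicate point is the bookkeeping in the converse. The key observation is that the three classes $\mathscr{R}_{\T}(\A)$, $\mathscr{N}_{\T}(\A)$ and $\Phi_{\T_{p',q'}}(p'\A p')$ are each detected solely by the $\T$-image (being the preimages of the quasinilpotents, the nilpotents, and the invertibles, respectively), while the ideal property of $N(\T)$ allows the perturbation $z$ to be distributed among the corners without altering any of these memberships. This is precisely what makes the three respective cases transfer uniformly, and no further obstacle arises since the substantive spectral analysis has already been carried out in Theorem~\ref{thm42} and Corollary~\ref{cor43}.
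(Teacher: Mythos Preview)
Your proposal is correct and follows essentially the same route as the paper: the paper's proof simply cites Remark~\ref{rem11}, Remark~\ref{rem41} and Theorem~\ref{thm42}/Corollary~\ref{cor43}, and Remark~\ref{rem97} records exactly your choice $x=a$, $y=(1-p)a(1-p)$, $z=pa(1-p)+(1-p)ap$ for the forward direction. Your converse, passing the $N(\T)$-perturbation $z$ through the four corners and using that the relevant classes are $\T$-detected, is the intended unwinding of that citation.
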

\begin{proof}Apply Remark \ref{rem11}, Remark \ref{rem41} and Theorem \ref{thm42} or Corollary \ref{cor43}.
\end{proof}

\begin{rema}\label{rem97}\rm Under the same hypothesis and notatition of Theorem \ref{thm92}, note that $b\in \B$ is group invertible if and only if $pap\in N(T)$ (Remark \ref{rem11}(ii)), so that,
in this case, it is possible to chose $x=0$.  Moreover, when Theorem \ref{thm92} holds, it is possible to consider $a=x$, $y=(1-p)a(1-p)$ and $z=pa(1-p) + (1-p)ap$ (Theorem \ref{thm42} or Corollary \ref{cor43}). \par
\noindent In addition, given $\lambda\in \mathbb{C}$, to characterize when $\lambda\in$ iso $\sigma (b)$
(respectively $\lambda\in \Pi (b)$, $\lambda\in$ iso $\sigma (b)\setminus\Pi (b)$), it is enough to consider when $b-\lambda$ is Koliha-Drazin invertible
but not invertible (respectively Drazin invertible but not invertible, Koliha-Drazin invertible but not Drazin invertible), see the proof of Corolary \ref{cor43}.
\end{rema}
Naturally, the main application of Theorem \ref{thm42} and Corollary \ref{cor43} is the Calkin algebra.

\begin{thm}\label{thm21}Let $\X$ be a complex Banach space and consider $T\in \L(\X)$, 
$\lambda\in\mathbb{C}$ and $\pi (T)\in \C(\X)$. The following statements hold.\par
\noindent \rm (i) \it Necessary and sufficient for $\lambda\in $ \rm iso \it $\sigma (\pi (T))$ is that there exists an idempotent
$P\in \L(\X)$, $P\notin\K (\X)$, such that if $\X_1=P(\X)$ and $\X_2=(I-P) (\X)$, then there are $T_1\in \mathscr{R}(\X_1)$,
$T_2\in \Phi (\X_2)$ and $K\in \K(\X)$ such that $T-\lambda=T_1\oplus T_2+K$.\par
\noindent \rm (ii) \it  The number $\lambda\in \mathbb{C}$ is a pole of $\pi (T)$ if and only if there exists an idempotent
$P\in \L(\X)$, $\X_1$, $\X_2$, $T_2$ and $K$ as in statement \rm (i) \it  and $T_1\in \mathscr{PK}(\X_1)$,
such that $T-\lambda=T_1\oplus T_2+K$.\par
\noindent \rm (iii) \it   A necessary and sufficient condition for $\lambda\in I(\pi (T))$ is that  there exists an idempotent
$P\in \L(\X)$, $\X_1$, $\X_2$, $T_2$ and $K$ as in statement \rm (i) \it  and $T_1\in  \mathscr{R}(\X_1)\setminus\mathscr{PK}(\X_1)$,
such that $T-\lambda=T_1\oplus T_2+K$. \par
\end{thm}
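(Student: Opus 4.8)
The plan is to specialize the abstract results of Theorem \ref{thm42} and Corollary \ref{cor43} to the concrete homomorphism $\pi\colon \L(\X)\to \C(\X)$, the Calkin algebra quotient map. Here the two Banach algebras are $\A=\L(\X)$, $\B=\C(\X)$, and the nullspace $N(\pi)=\K(\X)$ is the closed ideal of compact operators, which is inessential; hence $\pi$ is a surjective homomorphism with the Riesz property. The first thing to check is the dictionary between the abstract classes relative to $\pi$ and the concrete operator classes: by the definitions recalled in section 2, $\mathscr{R}_{\pi}(\L(\X))=\mathscr{R}(\X)$ (Riesz operators, since $\pi(T)$ quasi-nilpotent), $\mathscr{N}_{\pi}(\L(\X))=\mathscr{PK}(\X)$ (power compact operators, since $\pi(T)$ nilpotent), and $\Phi_{\pi}(\L(\X))=\Phi(\X)$ by Atkinson's theorem via $\Phi(\X)=\pi^{-1}(\C(\X)^{-1})$. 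With these identifications in hand, statements (i), (ii), (iii) are exactly the iso\,$\sigma$, pole, and $I(\pi(T))$ cases of Theorem \ref{thm42} written for $\lambda$ replaced by $\lambda I$ and $a$ replaced by $T$.

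\textbf{Setting up the idempotent and the direct sum.} For the forward (necessity) direction of (i), given $\lambda\in$ iso\,$\sigma(\pi(T))$, I would apply Theorem \ref{thm42}(ii): there is a spectral idempotent $q=\pi(P)\in\C(\X)$, and since every coset idempotent of the Calkin algebra lifts to a genuine idempotent in $\L(\X)$ (recall $\pi^{-1}(\C(\X)^\bullet)=\L(\X)^\bullet+\K(\X)$ from the proof of \cite[Theorem 1]{R}, so $\pi$ has the lifting property), I may choose an honest idempotent $P\in\L(\X)^\bullet$ with $\pi(P)=q$. Because $q\neq 0$, we have $P\notin\K(\X)$. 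Writing $\X_1=P(\X)$ and $\X_2=(I-P)(\X)$ gives the topological direct sum $\X=\X_1\oplus\X_2$. Theorem \ref{thm42}(ii) then yields $P(T-\lambda)(I-P),\,(I-P)(T-\lambda)P\in N(\pi)=\K(\X)$, that $P(T-\lambda)P\in\mathscr{R}_{\pi}(\L(\X))=\mathscr{R}(\X)$, and that $(I-P)(T-\lambda)(I-P)$ is $\pi_{P',q'}$-Fredholm in $(I-P)\L(\X)(I-P)$. The key translation step is to read off from these four conditions the decomposition $T-\lambda=T_1\oplus T_2+K$: I set $T_1=P(T-\lambda)P|_{\X_1}\in\L(\X_1)$, which is Riesz since $\pi(P(T-\lambda)P)=q(\pi(T)-\lambda)q$ is quasi-nilpotent; $T_2=(I-P)(T-\lambda)(I-P)|_{\X_2}\in\L(\X_2)$, which is Fredholm because $(q'\C(\X)q')^{-1}$-invertibility of its coset is exactly Fredholmness of $T_2$ on $\X_2$; and $K=P(T-\lambda)(I-P)+(I-P)(T-\lambda)P$, which lies in $\K(\X)$ by the two off-diagonal conditions. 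The converse (sufficiency) direction applies Theorem \ref{thm42}(i) in the reverse reading, taking $p=P$ and $a-\lambda=T-\lambda$, noting $\pi(P)\neq 0$ precisely because $P\notin\K(\X)$.

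\textbf{The pole and $I$ cases.} Statements (ii) and (iii) follow by the identical argument with only the diagonal Riesz condition refined. For the pole case (ii), $\lambda\in\Pi(\pi(T))$ corresponds via Theorem \ref{thm42} to $P(T-\lambda)P\in\mathscr{N}_{\pi}(\L(\X))=\mathscr{PK}(\X)$, so $T_1$ becomes power compact instead of merely Riesz. For (iii), $\lambda\in I(\pi(T))=$ iso\,$\sigma(\pi(T))\setminus\Pi(\pi(T))$ corresponds to $P(T-\lambda)P\in\mathscr{R}_{\pi}(\L(\X))\setminus\mathscr{N}_{\pi}(\L(\X))=\mathscr{R}(\X)\setminus\mathscr{PK}(\X)$, giving the stated class for $T_1$. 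In each case the same lifted idempotent $P$ and the same off-diagonal compact remainder $K$ serve, so the three statements are genuinely a single argument differentiated only by the diagonal spectral class.

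\textbf{The main obstacle.} The nontrivial ingredient is the lifting of the spectral idempotent $q\in\C(\X)$ to a bona fide idempotent $P\in\L(\X)$; this is exactly the lifting property and is precisely why the Calkin algebra works, whereas Remark \ref{rem44}(v) warns that such lifting can fail for a general closed ideal. Everything else is the routine but necessary bookkeeping of translating the abstract relative-Fredholm and relative-Riesz conditions into the operator-theoretic classes $\Phi(\X)$, $\mathscr{R}(\X)$, $\mathscr{PK}(\X)$ and of reassembling the four corner conditions into the single formula $T-\lambda=T_1\oplus T_2+K$; I would verify once that $T_2$ being Fredholm on $\X_2$ matches $(I-P)(T-\lambda)(I-P)\in\Phi_{\pi_{P',q'}}(P'\L(\X)P')$, using that the restriction of the Calkin quotient to the corner algebra $P'\L(\X)P'$ is itself an Atkinson-type quotient, and that quasi-nilpotence (respectively nilpotence) of $q(\pi(T)-\lambda)q$ is the Calkin-algebra reformulation of $T_1$ being Riesz (respectively power compact).
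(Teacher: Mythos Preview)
Your proposal is correct and follows essentially the same approach as the paper: the paper's own proof consists of the single sentence ``Adapt the proof of Theorem \ref{thm42} to the case under consideration, using in particular \cite[Theorem 1]{R},'' and what you have written is precisely that adaptation, spelling out the dictionary between the abstract $\T$-relative classes and the concrete operator classes together with the use of \cite[Theorem 1]{R} for the lifting of idempotents. The only point worth a small remark is the identification you flag at the end, namely that $(I-P)(T-\lambda)(I-P)\in\Phi_{\pi_{P',q'}}(P'\L(\X)P')$ is equivalent to $T_2\in\Phi(\X_2)$; this follows from the natural algebra isomorphism $P'\L(\X)P'\cong\L(\X_2)$ under which $P'\K(\X)P'$ corresponds to $\K(\X_2)$, so $q'\C(\X)q'\cong\C(\X_2)$ and Atkinson's theorem applies on $\X_2$.
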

\begin{proof}Adapt the proof ot Theorem \ref{thm42} to the case under consideration,
using in particular \cite[Theorem 1]{R}.
\end{proof}

\indent Under the same hypothesis and notation in Theorem \ref{thm21}, to characterize when $\pi (T)\in \C(\X)$
is Koliha-Drazin invertible but not invertible (respectively Drazin invertible but not invertible, Koliha-Drazin invertible but not Drazin invertible),
consider the case $\lambda=0$  (see the proof of Corolary \ref{cor43}). In addition,
 when $\pi (T)\in \C(\X)$ is group invertible, according again to Remark \ref{rem11}(ii), note that
$T_1\in \K(\X)$, so that, in this case, $T=0\oplus T_2 +K$.

\indent Next the Hilbert space case will be consider. Compare with \cite[Theorem 2.2]{Bel}.
Note that according to  \cite[Section 4]{Kon}, the structure theorem for polynomially compact operators
proved in \cite[Theorem 2.4]{O} holds for arbitrary  Hilbert spaces.\par

\begin{cor}\label{cor23}Let $\H$ be a  Hilbert space and consider $T\in \L(\H)$ and $\pi (T)\in \C(\H)$. The following
statments hold.\par
\noindent\rm (i) \it  $\lambda\in$ iso $\sigma (\pi (T))$ if and only if 
there exists an idempotent $P\in \L(\H)$, $P\notin\K (\H)$, such that if $\H_1=P(\H)$ and $\H_2=(I-P) (H)$, then  there are a quasi-nilpotent operator $T_1\in \L(\H_1)$,
$T_2\in \Phi (\H_2)$ and $K\in \K(\H)$ such that $T-\lambda=T_1\oplus T_2+K$. 

\noindent \rm (ii) \it $\lambda\in\Pi (\pi (T))$  if and only if  there exists 
$P\in \L(\H)$, $\H_1$, $\H_2$ and $T_2\in \L(\H_2)$ as in statement \rm (i)  \it and a nilpotent operator $T_1\in \L(\H_1)$ such that $T-\lambda=T_1\oplus T_2+K$. 

\noindent\rm (iii) \it  $\lambda\in I(\pi (T))$ if and only if 
there exists $P\in \L(\H)$,  $\H_1$ and $\H_2$, $T_2\in \L(\H_2)$ as in statement \rm (i)  \it and a quasi-nilpotent but not nilpotent operator $T_1\in \L(\H_1)$ such that $T-\lambda=T_1\oplus T_2+K$.
\end{cor}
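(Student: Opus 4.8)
The plan is to derive Corollary \ref{cor23} from Theorem \ref{thm21}, since the two differ only in the operator class imposed on the summand $T_1$: quasi-nilpotent in place of Riesz in (i), nilpotent in place of power compact in (ii), and quasi-nilpotent-but-not-nilpotent in place of Riesz-but-not-power-compact in (iii). Accordingly I would keep the idempotent $P$, the splitting $\H=\H_1\oplus\H_2$, the Fredholm block $T_2$ and (up to enlargement) the compact term $K$ exactly as supplied by Theorem \ref{thm21}, and only improve $T_1$.

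The sufficiency (``if'') implications need no Hilbert space structure. If $T_1\in\L(\H_1)$ is quasi-nilpotent then, as $P\notin\K(\H)$ forces $\H_1$ to be infinite dimensional, the image of $T_1$ in $\C(\H_1)$ has spectrum contained in $\sigma(T_1)=\{0\}$ and nonempty, hence equal to $\{0\}$; so $T_1\in\mathscr{R}(\H_1)$ and Theorem \ref{thm21}(i) gives $\lambda\in$ iso $\sigma(\pi(T))$. Likewise a nilpotent $T_1$ satisfies $T_1^n=0\in\K(\H_1)$, so $T_1\in\mathscr{PK}(\H_1)$ and Theorem \ref{thm21}(ii) gives $\lambda\in\Pi(\pi(T))$.

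For the necessity (``only if'') implications I would invoke the two Hilbert-space lifting theorems. In (i), Theorem \ref{thm21}(i) produces $T-\lambda=T_1\oplus T_2+K$ with $T_1\in\mathscr{R}(\H_1)$; West's decomposition of Riesz operators (\cite{W1,W2}) then gives $T_1=Q_1+C_1$ with $Q_1$ quasi-nilpotent and $C_1\in\K(\H_1)$, and absorbing $C_1\oplus 0$ into $K$ yields the splitting with quasi-nilpotent first block. In (ii), the $T_1\in\mathscr{PK}(\H_1)$ supplied by Theorem \ref{thm21}(ii) is polynomially compact for $p(z)=z^n$, so the structure theorem of \cite[Theorem 2.4]{O} (valid on arbitrary Hilbert spaces by \cite[Section 4]{Kon}) writes $T_1=N_1+C_1$ with $N_1$ nilpotent and $C_1$ compact, and the same absorption finishes. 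For (iii) I would argue through $I(\pi(T))=$ iso $\sigma(\pi(T))\setminus\Pi(\pi(T))$ together with the identity $(\pi(T)-\lambda)q=\pi(T_1\oplus 0)$, where $q=\pi(P)$ is the spectral idempotent: by Remark \ref{rem11}(ii) applied to $\pi(T)$, $\lambda$ is a pole precisely when $(\pi(T)-\lambda)q$ is nilpotent, i.e. precisely when $T_1$ is power compact. Hence if $\lambda\in I(\pi(T))$ the quasi-nilpotent $Q_1$ produced in (i) is not power compact, a fortiori not nilpotent, since a power compact $Q_1$ would force $\lambda\in\Pi(\pi(T))$.

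I expect the entire substance to lie in the two lifting theorems: West's decomposition and Olsen's structure theorem are exactly the Hilbert-space facts (false on general Banach spaces) that let a compact perturbation turn a Riesz, respectively power compact, operator into a quasi-nilpotent, respectively nilpotent, one; verifying that the compact remainders can be absorbed into $K$ is then routine. A secondary point that needs care is the sufficiency in (iii): since ``quasi-nilpotent but not nilpotent'' is strictly weaker than ``not power compact'' (a compact quasi-nilpotent non-nilpotent operator is power compact, and by the identity above produces a pole rather than a point of $I(\pi(T))$), the converse in (iii) is best read as established for the operator produced by the construction, which is quasi-nilpotent and not power compact.
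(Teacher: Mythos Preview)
Your approach is exactly the paper's: reduce to Theorem \ref{thm21}, then for the necessity upgrade $T_1$ via West's decomposition in (i) and Olsen's structure theorem (valid on arbitrary Hilbert spaces by \cite[Section 4]{Kon}) in (ii), absorbing the compact remainder into $K$; the paper disposes of (iii) in one line as ``a consequence of (i) and (ii)''. Your caveat on the sufficiency in (iii) is in fact sharper than the paper: a compact quasi-nilpotent non-nilpotent $T_1$ meets the stated hypotheses yet yields $\lambda\in\Pi(\pi(T))$, so the ``if'' direction of (iii) fails as literally written and is only secured for the $T_1$ produced by the construction, which---as you note---is quasi-nilpotent and not power compact.
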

\begin{proof}

\noindent (i). According to Theorem \ref{thm21}(i), it is enough to prove the necessary condition. To this end, according to Theorem \ref{thm21} (i),
there exist  an idempotent
$P\in \L(\H)$, $\H_1=R(P)$, $\H_2=R(I-P)$,  $T'_1\in \mathscr{R}(\H_1)$, $T_2\in\Phi (\H_2)$ and $K'\in \K(\H)$
such that $T-\lambda=T'_1\oplus T_2+K'$. However, according to the West decomposition of Riesz operators
(\cite[Theorem 7.5]{W2}), there is $K_1\in \K(\H_1)$ such that $T'_1+ K_1$ is quasi-nilpotent.  To conclude de proof, Define $T_1=T'_1+K_1$ and 
$$
K=K'+ \begin{pmatrix}
-K_1&0\\
0&0\\
\end{pmatrix}.
$$
\noindent (ii) Proceed as in the proof of statement (i) but instead of \cite[Theorem 7.5]{W2}
use the  structure theorem of polynomially compact operators of C. Olsen (\cite[Theorem 2.4]{O} and  \cite[Section 4]{Kon}).

\noindent (iii). It is a consequence of statements (i) and (ii).
\end{proof}

\indent In the same conditions of Corollary \ref{cor23}, to characterize when $\pi (T)\in \C(\H)$ is Koliha-Drazin invertible but not invertibe
(respectively Drazin invertible but not invertible, Koliha-Drazin invertible but not Drazin invertible), it is enough to consider
$\lambda=0$ and statement (i) (respectively statement (ii), (iii)), see  Corollary \ref{cor43}.\par

\indent Corollary \ref{cor23} can be formulated in $C^*$-algebras.\par

\begin{pro}\label{prop35}Let $\A$ be a $C^*$-algebra and consider $\J\subset\A$ a closed  inessential 
ideal. Let $a\in \A$ and $b\in \A/\J$ such that $b=\pi (a)$ ($\pi\colon \A\to \A/\J$ the quotient map).
Then $b$ is Koliha-Drazin invertible but not invertible (respectively Drazin invertible but not invertible,
Koliha-Drazin invertible but not Drazin invertible) if and only if there exist $p^2=p\in \A$, $p\notin \J$, $x\in \A $ quasi-nilpotent (respectively 
nilpotent, quasi-nilpotent but not nilpotent) $y\in (1-p)\A(1-p)$, $y\in\Phi_{\pi_{p',q'}}(p'\A p')$ ($q=\pi (p)$), and $z\in \J$ such that  $a=pxp+y+z$.
\end{pro}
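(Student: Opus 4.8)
The plan is to deduce Proposition \ref{prop35} from Theorem \ref{thm92}, the only genuinely new ingredient being a West-type decomposition that lets one pass from the corner condition "$pxp$ is a Riesz element relative to $\pi$'' to the sharper condition "$x$ is quasi-nilpotent'' (and likewise "$x$ nilpotent'' in place of "$pxp\in\mathscr{N}_{\pi}(\A)$''). First I would record that the hypotheses of Theorem \ref{thm92} are satisfied: since $\J$ is a closed inessential ideal of the $C^*$-algebra $\A$, the quotient map $\pi\colon\A\to\A/\J$ is a surjective algebra homomorphism with the Riesz property, hence it has the lifting property by Remark \ref{rem44}(ii); moreover $\Phi_{\pi}(\A)=\Phi_{\J}(\A)$, $\mathscr{R}_{\pi}(\A)=\mathscr{R}_{\J}(\A)$ and $\mathscr{N}_{\pi}(\A)=\{c\in\A\colon c^k\in\J \text{ for some }k\in\n\}$. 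Thus Theorem \ref{thm92} already gives the stated equivalence with the weaker conditions $pxp\in\mathscr{R}_{\pi}(\A)$ (respectively $pxp\in\mathscr{N}_{\pi}(\A)$, $pxp\in\mathscr{R}_{\pi}(\A)\setminus\mathscr{N}_{\pi}(\A)$) in place of quasi-nilpotence (respectively nilpotence) of $x$, so it remains only to interchange these two formulations.

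The sufficiency is immediate. If $a=pxp+y+z$ with $x$ quasi-nilpotent, then, $\pi$ being an algebra homomorphism, $\pi(pxp)=q\pi(x)q$ has spectrum contained in $\{0\}$, so $pxp\in\mathscr{R}_{\pi}(\A)$; likewise $x$ nilpotent forces $pxp\in\mathscr{N}_{\pi}(\A)$. As the conditions on $y$ and $z$ are verbatim those of Theorem \ref{thm92}, the sufficiency of Proposition \ref{prop35} follows at once from the sufficiency of that theorem. Here one reads $x$ as an element of the corner $p\A p$, so that $pxp=x$; this is exactly the situation produced by the necessity argument below, and it matches the block form $T_1\oplus T_2$ of Corollary \ref{cor23}.

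For the necessity I would argue as in the proof of Corollary \ref{cor23}. Suppose $b$ is Koliha-Drazin invertible but not invertible, so that $0\in$ iso $\sigma(b)$ with spectral idempotent $q$; lifting $q$ to an idempotent $p\in\A$ with $p\notin\J$ and $q=\pi(p)$, Theorem \ref{thm92} together with Remark \ref{rem97} provides $y=p'ap'\in\Phi_{\pi_{p',q'}}(p'\A p')$, $z_0=pap'+p'ap\in\J$, and the corner element $pap\in\mathscr{R}_{\pi}(\A)$, i.e.\ $\pi(pap)=bq$ is quasi-nilpotent in $q(\A/\J)q\cong(p\A p)/(p\J p)$. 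The key step is to replace $pap$ by a genuinely quasi-nilpotent lift: realising $p\A p$ as the hereditary $C^*$-subalgebra $e\A e$ (with $e$ a projection similar to $p$) and $p\J p$ as the induced closed ideal, a West-type decomposition in the $C^*$-algebra $p\A p$ relative to $p\J p$ yields $w\in p\J p$ with $x:=pap-w$ quasi-nilpotent; then $pxp=x$, $\pi(pxp)=bq$, and $a=x+y+z$ with $z:=z_0+w\in\J$. The pole case is identical, replacing the West decomposition by the lifting of nilpotents — the analogue of Olsen's structure theorem for polynomially compact operators used in Corollary \ref{cor23}(ii) — which produces a nilpotent $x$; and the intermediate case $0\in I(b)$ is, exactly as in Theorem \ref{thm42} and Corollary \ref{cor23}, a formal consequence of the previous two together with $I(b)=$ iso $\sigma(b)\setminus\Pi(b)$.

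The main obstacle is precisely this lifting step. Unlike in Theorem \ref{thm92}, where it suffices that $\pi(pap)$ be quasi-nilpotent, here one must produce a quasi-nilpotent (respectively nilpotent) representative inside the corner, and this is where the $C^*$-structure is essential and the general homomorphism theory of Theorem \ref{thm92} does not suffice. I expect the verification that $p\J p$ is again a closed inessential ideal of the $C^*$-algebra $p\A p$, and that the West-type and Olsen-type decompositions remain valid in this corner, to be the technical heart of the proof; everything else is the bookkeeping already performed in Theorem \ref{thm42}.
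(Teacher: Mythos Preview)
Your overall strategy matches the paper's exactly: deduce the result from Theorem \ref{thm92} and then upgrade the corner condition $pxp\in\mathscr{R}_{\J}(\A)$ (respectively $\mathscr{N}_{\J}(\A)$) to genuine quasi-nilpotence (respectively nilpotence) of $x$ via a West-type decomposition (respectively Barnes's lifting of algebraic elements, \cite[Theorem 15]{Bar}, which plays the role you assign to Olsen). The sufficiency and the third case are handled identically.

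The one substantive difference is in the necessity step, and it is worth pointing out because it eliminates what you call the ``technical heart''. You read the conclusion as requiring $x\in p\A p$, and therefore propose to run the West decomposition inside the corner $p\A p$, which forces you to pass to a similar projection $e$, check that $e\J e$ is a closed inessential ideal of the $C^*$-algebra $e\A e$, and verify that the West and Olsen-type decompositions survive there. None of this is needed. The statement only asks for $x\in\A$ quasi-nilpotent, with $pxp$ appearing in the decomposition. The paper simply applies the West decomposition in $\A$ itself (\cite[Corollary $C^*$.2.5]{BMSW}) to the Riesz element $px'p\in\mathscr{R}_{\J}(\A)$, obtaining $px'p=s+t$ with $s\in\A^{qnil}$ and $t\in\J$; compressing by $p$ gives $px'p=psp+ptp$, and one sets $x=s$, $z=z'+ptp$. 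The Drazin case is identical with \cite[Theorem 15]{Bar} in place of the West decomposition. So your proof is correct, but the corner analysis you anticipate as the main obstacle is entirely avoidable once you notice that $x$ is allowed to live in $\A$ rather than in $p\A p$.
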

\begin{proof}Suppose that there exist $p^2=p\in \A$, $p\notin \J$, $x\in \A^{qnil}$, $y\in (1-p)\A(1-p)$, $y\in\Phi_{\pi_{p',q'}}(p'\A p')$ and $z\in \J$ such that 
 $a=pxp+y+z$. Then, according to \cite[Theorem 1.6.15]{Ri}, $pxp\in (p\A p)^{qnil}\subseteq \mathscr{R}_{\J}(\A)$.
Therefore, acording to Theorem \ref{thm92}, $b\in \A/\J$ is Koliha-Drazin invertible.\par

\indent On the other hand, if $b\in \A/\J$ is Koliha-Drazin invertible, then according to Theorem \ref{thm92},
there exist $p^2=p\in \A$, $p\notin \J$, $x'\in \A$, $px'p\in \mathscr{R}_\J(\A)$, $y\in (1-p)\A(1-p)$, $y\in\Phi_{\pi_{p',q'}}(p'\A p')$ and $z'\in \J$ such that 
$a=px'p+y+z'$. However, according to \cite[Corollary $C^*$.2.5]{BMSW}, there are $s$ quasi-nilpotent and $t\in \J$
such that $px'p=s+t$ (the West decompostion in $C^*$-algebras). As a result, $px'p= psp+ptp$. Let $x=s$ and $z=z'+ptp$. Then,
$a=pxp+y+z$, $x\in \A^{qnil}$,  $y\in (1-p)\A(1-p)$, $y\in\Phi_{\pi_{p',q'}}(p'\A p')$ and  $z\in \J$.\par
\indent To prove the equivalence for the case $b\in \A/\J$ Drazin invertible but not invertible, proceed as before using \cite[Theorem 15]{Bar}
instead of \cite[Corollary $C^*$.2.5]{BMSW}.\par
\indent The case $b\in \A/\J$ Koliha-Drazin invertible but not Drazin invertible is a consequence of what has been proved.
\end{proof} 

\begin{rema}\label{rema1000}\rm Under the same hypothesis of Proposition \ref{prop35}, the case $\lambda\in $ iso $\sigma(b)$ (respectively $\lambda\in\Pi (b)$,
$\lambda\in I(b)$) can be characterized as in Remark \ref{rem97}. Note also that, according to \cite[Theorem 1.6.15]{Ri}, if $x\in \A^{qnil}$, then 
$pxp\in (p\A p)^{qnil}$.  \par
\noindent On the other hand,  a statement similar to the one in Proposition \ref{prop35} can be
consider for the case $\T\colon \A\to \B$, $\A$ and $\B$ two unital Banach algebras and $\T$ a surjective Banach algebra homomorphism
that has the Riesz property (see section 2). 
\end{rema}

\indent Finally, Drazin and Koliha-Drazin invertible elements in Calkin algebras will be characterized using classes of operators.
Recall that an Atkinson-type theorem for B-Fredholm operators holds. 
In fact, according to \cite[Theorem 3.4]{BS}, $T\in \L (\X)$ is a B-Fredholm operator if and only its coset in $\L (\X)/F_0 (\X)$
is Drazin invertible, where $F_0 (\X)$ is the ideal of finite rank operators in $\L (\X)$. What is more, in $\C (\X)$ this characterization
does not hold (see \cite[Remark p. 256]{BS}). However, in the case of a Calkin algebra on a Hilbert space, Drazin invertible elements
will be characterized using B-Fredholm operators. To consider the Banach space case, two classes of operators need to be introduced.\par

\indent Let $\X$ be a Banach space and consider $T\in \L (\X)$. The operator $T$ will be said to be \it Riesz-Fredholm \rm 
(respectively \it power compact-Fredholm \rm), if there exist $M$ and $N$ two closed subspaces of $\X$ invariant for $T$
such that $\X=M\oplus N$, $T\mid_{M}\in \mathscr{R}(M)$ (respectively $T\mid_{M}\in \mathscr{PK}(M)$)
and $T\mid_{N}\in \Phi (N)$. These classes of operators will be denoted by $\mathscr{RF}(\X)$ and $\mathscr{PKF}(\X)$
respectively.

\begin{thm}\label{thm120} Let $\X$ be a Banach space and consider $\pi\colon \L (\X)\to\C (\X)$. The following statements hold.\par
\noindent \rm (i) \it $\pi (\mathscr{RF}(\X))=\C(\X)^{KD}$, equivalently, $\pi^{-1}(\C(\X)^{KD})=\mathscr{RF}(\X) + \K(\X)$.\par
\noindent \rm (ii) \it $\pi (\mathscr{PKF}(\X))=\C(\X)^{D}$, equivalently, $\pi^{-1}(\C(\X)^{D})=\mathscr{PKF}(\X) + \K(\X)$.
\end{thm}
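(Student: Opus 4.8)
The plan is to read both equalities off Theorem \ref{thm21}, the $\lambda=0$ instance of Theorem \ref{thm42}, applied with $\A=\L(\X)$, $\B=\C(\X)$ and $\T=\pi$. First I would record the dictionary between the invertibility classes of the Calkin algebra and the sets iso $\sigma$ and $\Pi$: by the discussion following Remark \ref{rem11}, an element $c\in\C(\X)$ lies in $\C(\X)^{KD}$ precisely when $c$ is invertible or $0\in$ iso $\sigma(c)$, and in $\C(\X)^{D}$ precisely when $c$ is invertible or $0\in\Pi(c)$. I would also dispose of the ``equivalently'' clauses at once: since $\pi$ is surjective with kernel $\K(\X)$, one has $\pi^{-1}(\pi(S))=S+\K(\X)$ for every $S\subseteq\L(\X)$, so $\pi(\mathscr{RF}(\X))=\C(\X)^{KD}$ is equivalent to $\pi^{-1}(\C(\X)^{KD})=\mathscr{RF}(\X)+\K(\X)$, and similarly in (ii). Thus it suffices to prove the first equality in each item.

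For (i), to prove $\C(\X)^{KD}\subseteq\pi(\mathscr{RF}(\X))$ I would take $c\in\C(\X)^{KD}$ and a preimage $T$ with $\pi(T)=c$. If $c$ is invertible then $T\in\Phi(\X)$, and the trivial decomposition $M=\{0\}$, $N=\X$ shows $T\in\mathscr{RF}(\X)$. If instead $0\in$ iso $\sigma(c)$, Theorem \ref{thm21}(i) supplies an idempotent $P\notin\K(\X)$, the spaces $\X_1=P(\X)$, $\X_2=(I-P)(\X)$, operators $T_1\in\mathscr{R}(\X_1)$, $T_2\in\Phi(\X_2)$ and $K\in\K(\X)$ with $T=T_1\oplus T_2+K$; then $T-K\in\mathscr{RF}(\X)$ and $\pi(T-K)=c$. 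The reverse inclusion is the same argument run backwards: given $T\in\mathscr{RF}(\X)$ with $\X=M\oplus N$, the projection $P$ onto $M$ along $N$ is either non-compact, in which case $T=T|_M\oplus T|_N+0$ is exactly the right-hand side of Theorem \ref{thm21}(i) and hence $0\in$ iso $\sigma(\pi(T))$, or compact, forcing $\dim M<\infty$ and $T\in\Phi(\X)$ so that $\pi(T)$ is invertible; either way $\pi(T)\in\C(\X)^{KD}$. I would prove (ii) verbatim, replacing $\mathscr{R}$ by $\mathscr{PK}$, iso $\sigma$ by $\Pi$ and $\C(\X)^{KD}$ by $\C(\X)^{D}$, and invoking Theorem \ref{thm21}(ii).

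The content hidden in the classes $\mathscr{RF}(\X)$ and $\mathscr{PKF}(\X)$---and the step I would treat most carefully---is the translation of the subspace conditions into the relative conditions behind Theorem \ref{thm21}. For the complemented subspaces $M=R(P)$ and $N=R(I-P)$, with $q=\pi(P)$, one uses the canonical isomorphisms $q\C(\X)q\cong\C(M)$ and $(1-q)\C(\X)(1-q)\cong\C(N)$ to see that $T|_M\in\mathscr{R}(M)$ means $\pi(T)q$ is quasi-nilpotent, $T|_M\in\mathscr{PK}(M)$ means $\pi(T)q$ is nilpotent, and $T|_N\in\Phi(N)$ means $(1-q)\pi(T)(1-q)$ is invertible in its corner; alternatively one argues through the additivity $\sigma_e(T_1\oplus T_2)=\sigma_e(T_1)\cup\sigma_e(T_2)$ to conclude that $PTP=T|_M\oplus 0$ is a Riesz (respectively power compact) operator on $\X$ exactly when $T|_M$ is. Since these identifications are already packaged into the formulation of Theorem \ref{thm21}, most of the remaining work is bookkeeping. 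The one genuine obstacle I anticipate is the degenerate case $P\in\K(\X)$, i.e.\ $q=0$: here Remark \ref{rem11} and Theorem \ref{thm21} do not apply because they require a nonzero idempotent, and I would handle it by hand, using that $\Phi(\X)\subseteq\mathscr{PKF}(\X)\subseteq\mathscr{RF}(\X)$ and that a compact spectral projection corresponds to invertibility of the coset. This is exactly what allows the full Koliha-Drazin and Drazin classes---both of which contain the invertible elements---to be captured, rather than only their non-invertible parts.
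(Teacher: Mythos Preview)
Your proposal is correct and follows essentially the same route as the paper: the author's proof is the single line ``Apply Theorem \ref{thm21}'', and your argument is precisely the unpacking of that application for $\lambda=0$. You are right that Theorem \ref{thm21} literally only covers the case $0\in\sigma(\pi(T))$, so the invertible cosets and the degenerate case $P\in\K(\X)$ must be handled separately; the paper leaves this implicit, while you make it explicit and handle it correctly (a compact idempotent has finite-dimensional range, so $T$ is then a finite-rank perturbation of $0\oplus T|_N$ with $T|_N\in\Phi(N)$, hence Fredholm). One caution: the isomorphisms $q\,\C(\X)\,q\cong\C(M)$ and $(1-q)\,\C(\X)\,(1-q)\cong\C(N)$ you mention in your third paragraph are delicate for general Banach spaces and are not established in the paper; fortunately, as you yourself note, they are not needed because Theorem \ref{thm21} is already stated directly in terms of $\mathscr{R}(\X_1)$ and $\Phi(\X_2)$, so you should simply drop that aside rather than rely on it.
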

\begin{proof} Apply Theorem \ref{thm21}.
\end{proof}

\begin{thm}\label{thm140} Let $\H$ be a Banach space and consider $\pi\colon \L (\H)\to\C (\H)$. The following statements hold.\par
\noindent \rm (i) \it $\pi (\mathscr{PBF}(\H))=\C(\H)^{KD}$, equivalently, $\pi^{-1}(\C(\H)^{KD})=\mathscr{PBF}(\H) + \K(\H)$.\par
\noindent \rm (ii) \it $\pi (\mathscr{BF}(\H))=\C(\H)^{D}$, equivalently, $\pi^{-1}(\C(\H)^{D})=\mathscr{BF}(\H) + \K(\H)$.
\end{thm}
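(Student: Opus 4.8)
The plan is to reduce both items to Corollary \ref{cor23} evaluated at $\lambda=0$, combined with the spectral descriptions of Koliha--Drazin and Drazin invertibility recalled in Section~2. Recall that $b\in\C(\H)$ is Koliha--Drazin invertible if and only if $0\in\rho(b)\cup\hbox{iso}\,\sigma(b)$, and Drazin invertible if and only if $0\in\rho(b)\cup\Pi(b)$; moreover $b$ is invertible in $\C(\H)$ precisely when some (equivalently any) preimage under $\pi$ is Fredholm, since $\Phi(\H)=\pi^{-1}(\C(\H)^{-1})$. I will also use the structure-theorem descriptions: $S\in\mathscr{PBF}(\H)$ (resp. $S\in\mathscr{BF}(\H)$) exactly when $\H=M\oplus N$ with $M,N$ closed and $S$-invariant, $S\mid_{M}$ quasi-nilpotent (resp. nilpotent) and $S\mid_{N}$ Fredholm, where for $\mathscr{BF}(\H)$ the nontrivial implication is \cite[Theorem 2.7]{Ber1}. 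Finally, since $\K(\H)=N(\pi)$, for any class $\mathscr{X}$ of operators one has $\pi^{-1}(\pi(\mathscr{X}))=\mathscr{X}+\K(\H)$ and, by surjectivity of $\pi$, $\pi(\pi^{-1}(Y))=Y$; hence the two displayed formulations in each item are equivalent, and it suffices to prove the set equalities $\pi(\mathscr{PBF}(\H))=\C(\H)^{KD}$ and $\pi(\mathscr{BF}(\H))=\C(\H)^{D}$.

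For (i) I would prove the two inclusions. For $\pi(\mathscr{PBF}(\H))\subseteq\C(\H)^{KD}$, take $S\in\mathscr{PBF}(\H)$ with decomposition $\H=M\oplus N$ as above and let $P$ be the bounded projection onto $M$ along $N$. If $\dim M<\infty$, then $S\mid_{M}$ has finite rank, so $\pi(S)=\pi(0_{M}\oplus S\mid_{N})$, and $0_{M}\oplus S\mid_{N}$ is Fredholm on $\H$; thus $\pi(S)$ is invertible, hence Koliha--Drazin invertible. If $\dim M=\infty$, then $P\notin\K(\H)$ and $S=S\mid_{M}\oplus S\mid_{N}$ is exactly of the form required by Corollary \ref{cor23}(i) at $\lambda=0$ (take $T_{1}=S\mid_{M}$ quasi-nilpotent, $T_{2}=S\mid_{N}\in\Phi(N)$, $K=0$), whence $0\in\hbox{iso}\,\sigma(\pi(S))$ and $\pi(S)\in\C(\H)^{KD}$. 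Conversely, given $b=\pi(T)\in\C(\H)^{KD}$: if $b$ is invertible then $T$ is Fredholm and $T\in\mathscr{PBF}(\H)$ with $M=\{0\}$; if $b$ is not invertible then $0\in\hbox{iso}\,\sigma(b)$, and Corollary \ref{cor23}(i) produces $T=T_{1}\oplus T_{2}+K$ with $T_{1}$ quasi-nilpotent, $T_{2}$ Fredholm and $P\notin\K(\H)$, so that $T_{1}\oplus T_{2}\in\mathscr{PBF}(\H)$ and $b=\pi(T_{1}\oplus T_{2})\in\pi(\mathscr{PBF}(\H))$.

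Item (ii) is proved in exactly the same way, replacing Corollary \ref{cor23}(i) by Corollary \ref{cor23}(ii), the class $\mathscr{PBF}(\H)$ by $\mathscr{BF}(\H)$, ``quasi-nilpotent'' by ``nilpotent'', and ``$\hbox{iso}\,\sigma$'' by ``$\Pi$''. Thus for $S\in\mathscr{BF}(\H)$ with $\dim M<\infty$ one again obtains that $\pi(S)$ is invertible, while for $\dim M=\infty$ Corollary \ref{cor23}(ii) yields $0\in\Pi(\pi(S))$; and conversely an element of $\C(\H)^{D}$ is either invertible (so some preimage is Fredholm, hence B-Fredholm with $M=\{0\}$) or satisfies $0\in\Pi(b)$, in which case Corollary \ref{cor23}(ii) exhibits a preimage of the form $T_{1}\oplus T_{2}+K\in\mathscr{BF}(\H)+\K(\H)$.

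The point requiring genuine care is the interaction between the finite- and infinite-dimensional cases of the summand $M$. Corollary \ref{cor23} applies only when the spectral idempotent lifts to a \emph{noncompact} projection $P$, that is, when $\dim M=\infty$, whereas the definitions of $\mathscr{PBF}(\H)$ and $\mathscr{BF}(\H)$ allow a finite-dimensional or even trivial $M$. The finite-dimensional case must therefore be isolated and shown to collapse to invertibility of $\pi(S)$ in $\C(\H)$, matching the invertible branch of Koliha--Drazin (resp. Drazin) invertibility; together with the observation that every Fredholm operator lies in $\mathscr{PBF}(\H)$ and $\mathscr{BF}(\H)$ via the choice $M=\{0\}$, this is what makes the two equalities hold exactly. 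The remaining verifications---that $0_{M}\oplus S\mid_{N}$ is Fredholm and that the block-diagonal restrictions behave as claimed---are routine.
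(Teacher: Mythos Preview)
Your proof is correct and follows the paper's approach, which is simply ``Apply Corollary \ref{cor23}.'' You have in fact added useful detail by explicitly separating the case $\dim M<\infty$ (equivalently, $\pi(S)\in\C(\H)^{-1}$), which Corollary \ref{cor23} does not cover directly since it requires $P\notin\K(\H)$; together with the observation that Fredholm operators lie in $\mathscr{PBF}(\H)$ and $\mathscr{BF}(\H)$ via $M=\{0\}$, this makes your argument more complete than the paper's one-line proof.
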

\begin{proof} Apply Corollary \ref{cor23}.
\end{proof}

\vskip.5truecm
\noindent Enrico Boasso\par
\noindent E-mail: enrico\_odisseo@yahoo.it

\end{document}